% VERSIONE ESTRATTA DALL'ARTICOLO INIZIALMENTE SOTTOPOSTO A JLA
% ACCETTATA IL 22 NOVEMBRE 2014 SU "Rendiconti Lincei Matematica e Applicazioni"
% PICCOLE CORREZIONI FATTE IN ACCORDO COL REFEREE REPORT 

\documentclass[12pt]{amsart}

\usepackage{latexsym}
\usepackage{amssymb}
\usepackage{amsmath}
\usepackage{mathrsfs}

\newtheorem{theorem}{Theorem}[section]

\newtheorem{proposition}[theorem]{Proposition}

\theoremstyle{definition}
\newtheorem{definition}[theorem]{Definition}

\theoremstyle{remark}
\newtheorem{remark}[theorem]{Remark}

\newcommand{\F}{\mathbb{F}}
\newcommand{\N}{\mathbb{N}}

\newcommand{\Q}{\mathbb{Q}}
\newcommand{\R}{\mathbb{R}}

\newcommand{\p}{\mathcal{P}}

\newcommand{\num}{\mathfrak{n}}

\newcommand{\ring}{\mathfrak}
\newcommand{\A}{\mathfrak{A}}
\newcommand{\B}{\mathfrak{B}}
\newcommand{\C}{\mathfrak{C}}

\newcommand{\sh}{\text{sh}}
\newcommand{\st}{\text{st}}

\begin{document}

\title{Some applications of numerosities in measure theory}

\author{Vieri Benci}
\author{Emanuele Bottazzi}
\author{Mauro Di Nasso}

\address{Dipartimento di Matematica,
Universit\`{a} di Pisa, Italy and Department of Mathematics, College of Science, King Saud University, Riyadh,  Saudi Arabia}
\email{benci@dma.unipi.it}

\address{Dipartimento di Matematica,
Universit\`{a} di Trento, Italy.}
\email{emanuele.bottazzi@unitn.it}

\address{Dipartimento di Matematica,
Universit\`{a} di Pisa, Italy.}
\email{dinasso@dm.unipi.it}

\subjclass[2000]{26E30; 28E15; 26E35; 60A05}
\keywords{Non-Archimedean mathematics, measure theory,
nonstandard analysis, numerosities.}

\maketitle

\begin{abstract}
We present some applications of the notion of numerosity
to measure theory, including the construction of a
non-Archimedean model for the probability
of infinite sequences of coin tosses.
\end{abstract}

\section*{Introduction}

The idea of numerosity as a notion of measure for the size of
infinite sets was introduced by the first named author
in \cite{be95}, and then given sound logical foundations
in \cite{bd}. A theory of numerosities have been then developed
in a sequel of papers (see, \emph{e.g.}, \cite{bdf,df}).
The main feature of numerosities is that they satisfy the
same basic formal properties as finite cardinalities,
including the fact that proper subsets must have strictly
smaller sizes. This has to be contrasted with Cantorian
cardinalities, where every infinite set have proper subsets
of the same cardinality.

\smallskip
In this paper we will present three applications of numerosity
in topics of measure theory.
The first one is about the existence of ``inner measures''
associated to any given non-atomic pre-measure.
The second application is focused on sets of real numbers.
We show that elementary numerosities
provide a useful tool with really strong compatibility
properties with respect to the Lebesgue measure.
For instance, intervals of equal length can be given the same
numerosity, and 
any interval of rational length $p/q$ has a numerosity which 
is exactly  $p/q$.
We derive consequences about
the existence of totally defined finitely additive measures
that extend the Lebesgue measure.
Finally, the third application is about non-Archimedean probability.
Following ideas from \cite{nap}, we consider a
model for infinite sequences of coin tosses which is
coherent with the original view of Laplace.
Indeed, probability of an event is defined as
the numerosity of positive outcomes divided by the
numerosity of all possible outcomes;  moreover, the probability of
cylindrical sets exactly coincides with the usual Kolmogorov probability.

\section{Terminology and preliminary notions}

We fix here our terminology, and recall a few basic facts from measure
theory and numerosity theory
that will be used in the sequel.

\smallskip
Let us first agree on notation.
We write $A\subseteq B$ to mean that $A$ is a subset of $B$,
and we write $A\subset B$ to mean that $A$ is a \emph{proper} subset of $B$.
The \emph{complement} of a set $A$ is denoted by $A^c$, and its \emph{powerset}
is denoted by $\p(A)$. We use the symbol $\sqcup$ to denote
\emph{disjoint unions}.
By $\N$ we denote the set of \emph{positive integers}.
For an ordered field $\F$, we denote by
$[0,\infty)_\F=\{x\in\F\mid x\ge 0\}$
the set of its non-negative elements.
We will write $[0,+\infty]_\R$ to denote the set of non-negative
real numbers plus the symbol $+\infty$, where we agree
that $x+\infty=+\infty+x=+\infty+\infty=+\infty$ for all $x\in\R$.

\medskip
\begin{definition}
A \emph{finitely additive measure} is a triple $(\Omega,\A,\mu)$ where:

\smallskip
\begin{itemize}
\item
The \emph{space} $\Omega$ is a nonempty set;

\smallskip
\item
$\A$ is an \emph{algebra of sets} over $\Omega$, \emph{i.e.}
a nonempty family of subsets of $\Omega$ which is closed under
finite unions and intersections, and under relative complements, \emph{i.e.}
$A,B\in\A\Rightarrow A\cup B, A\cap B, A\setminus B\in\A$;\footnote
{~Actually, the closure under intersections follow from the other two properties,
since $A\cap B=A\setminus(A\setminus B)$.}

\smallskip
\item
$\mu:\A\to [0,+\infty]_\R$ is an \emph{additive function}, \emph{i.e.}
$\mu(A\cup B)=\mu(A)+\mu(B)$ whenever $A,B\in\A$ are disjoint.\footnote
{~Such functions $\mu$ are sometimes called \emph{contents} in the literature.}
We also assume that $\mu(\emptyset)=0$.
\end{itemize}

\smallskip
The measure $(\Omega,\A,\mu)$ is called \emph{non-atomic} when all
finite sets in $\A$ have measure zero. We say that
$(\Omega,\A,\mu)$ is a \emph{probability measure} when $\mu:\A\to[0,1]_\R$
takes values in the unit interval, and $\mu(\Omega)=1$.

\end{definition}

\medskip
For simplicity, in the following we will often identify the
triple $(\Omega,\A,\mu)$ with the function $\mu$.

\smallskip
Remark that a finitely additive measure $\mu$ is
necessarily \emph{monotone}, \emph{i.e.}

\smallskip
\begin{itemize}
\item
$\mu(A)\le\mu(B)$ for all $A,B\in\A$ with $A\subseteq B$.
\end{itemize}

\medskip
\begin{definition}
A finitely additive measure $\mu$ defined on a ring of sets $\A$ is
called a \emph{pre-measure} if it is $\sigma$-\emph{additive},
\emph{i.e.} if for every countable family $\{A_n\}_{n\in\N}\subseteq\A$
of pairwise disjoint sets whose union lies in $\A$, it holds:
$$\mu\left(\bigsqcup_{n\in\N}A_n\right)\ =\ \sum_{n=1}^\infty\mu(A_n).$$
A \emph{measure} is a pre-measure which is defined on a $\sigma$-\emph{algebra},
\emph{i.e.} on an algebra of sets which is closed under
countable unions and intersections.
\end{definition}

\medskip
\begin{definition}
An \emph{outer measure} on a set $\Omega$ is a function
$$M:\p(\Omega)\to[0,+\infty]_\R$$
defined on all subsets of $\Omega$ which is \emph{monotone}
and $\sigma$-\emph{subadditive}, \emph{i.e.}
$$M\left(\bigcup_{n\in\N}A_n\right)\ \leq\ \sum_{n\in\N}M(A_n).$$
It is also assumed that $M(\emptyset)=0$.
\end{definition}

\medskip
\begin{definition}
Given an outer measure $M$ on $\Omega$, the following family is
called the \emph{Caratheodory $\sigma$-algebra} associated to $M$:
$$\mathfrak{C}_M\ =\ \left\{X\subseteq\Omega\mid
M(Y)=M(X\cap Y)+M(X\setminus Y)\ \text{for all }Y\subseteq\Omega\right\}.$$
\end{definition}

\medskip
A well known theorem of Caratheodory states that the above family is
actually a $\sigma$-algebra, and that the restriction of $M$ to
$\mathfrak{C}_M$ is a \emph{complete} measure, \emph{i.e.}
a measure where $M(X)=0$ implies $Y\in\mathfrak{C}_M$ for all $Y\subseteq X$.
This result is usually combined with the property that every pre-measure
$\mu$ over a ring $\A$ of subsets of $\Omega$
is canonically extended to the outer measure
$\overline{\mu}:\p(\Omega)\to[0,\infty]_\R$ defined by putting:
$$\overline{\mu}(X)\ =\
\inf\left\{\sum_{n=1}^\infty\mu(A_n)\,\Big|\,
\{A_n\}_n\subseteq\A\ \&\ X\subseteq\bigcup_{n\in\N}A_n\right\}.$$

Indeed, a fundamental result in measure theory is that
the above function $\overline{\mu}$ is actually an outer measure
that extends $\mu$, and that the associated Caratheodory $\sigma$-algebra
$\mathfrak{C}_{\overline{\mu}}$ includes $\A$.
Moreover, such an outer measure $\overline{\mu}$ is \emph{regular},
\emph{i.e.} for all $X\in\p(\Omega)$ there exists $C\in\mathfrak{C}_{\overline{\mu}}$
such that $X\subseteq C$ and $\overline{\mu}(X)=\overline{\mu}(C)$.
(See \emph{e.g}. \cite{Yeh} Prop. 20.9.)

\medskip
Next, we will recall the notion of \emph{elementary numerosity},
a variant of the notion of numerosity that was introduced
in \cite{bbd}. The underlying idea is that of refining the notion
of finitely additive measure in such a way that also single points count.
To this end, one needs to consider ordered fields that extend the real line.

\smallskip
Recall that every ordered field $\F$ that properly extend $\R$
is necessarily \emph{non-Archimedean}, in that it
contains \emph{infinitesimal numbers} $\epsilon\ne 0$
such that $-1/n<\epsilon<1/n$ for all $n\in\N$.
Two elements $\xi,\zeta\in\F$ are called \emph{infinitely close}
if $\xi-\zeta$ is infinitesimal; in this case, we write
$\xi\approx\zeta$.
A number $\xi\in\F$ is called \emph{finite} if $-n<\xi<n$
for some $n\in\N$, and it is called \emph{infinite} otherwise.
Clearly, a number $\xi$ is infinite if and only if
its reciprocal $1/\xi$ is infinitesimal.
We remark that every finite $\xi\in\F$ is infinitely close to a
unique real number $r$, namely $r=\inf\{x\in\R\mid x>\xi\}$.
Such a number $r$ is called the \emph{standard part} of $\xi$,
and is denoted by $r=\st(\xi)$. Notice that $\st(\xi+\zeta)=\st(\xi)+\st(\zeta)$
and $\st(\xi\cdot\zeta)=\st(\xi)\cdot\st(\zeta)$ for all finite $\xi,\zeta$.
The notion of standard part can be extended to the infinite
elements $\xi\in\F$ by setting $\st(\xi)=+\infty$ when
$\xi$ is positive, and $\st(\xi)=-\infty$
when $\xi$ is negative.

\medskip
\begin{definition}
An \emph{elementary numerosity} on the set $\Omega$ is a function
$$\num:\p(\Omega)\longrightarrow[0,+\infty)_\F$$
defined on all subsets of $\Omega$, taking values
in an ordered field $\F\supseteq\R$ that extends the real line,
and that satisfies the following two properties:

\smallskip
\begin{enumerate}
\item
\emph{Additivity}:\ $\num(A\cup B)=\num(A)+\num(B)$ whenever
$A\cap B=\emptyset$;

\smallskip
\item
\emph{Unit size}:\ $\num(\{x\})=1$ for every point $x\in\Omega$.
\end{enumerate}
\end{definition}

\medskip
Notice that if $\Omega$ is a finite set,
then the only elementary numerosity is the finite cardinality.
On the other hand, when $\Omega$ is infinite, then
the numerosity function must also take ``infinite'' values,
and so the field $\mathbb{F}$ must be non-Archimedean.
It is worth remarking that also Cantorian cardinality
satisfies the above properties $(1),(2)$,
but the sum operation between cardinals is really far from
being a ring operation.\footnote
{~Recall that for infinite cardinals $\kappa,\nu$
it holds $\kappa+\nu=\max\{\kappa,\nu\}$.}

\smallskip
As straight consequences of the definition, we obtain
that elementary numerosities can be seen as generalizations of
finite cardinalities. Indeed, one can easily show that

\begin{itemize}
\item
$\num(A)=0$ if and only if $A=\emptyset$;
\item
If $A\subset B$ is a proper subset, then $\num(A)<\num(B)$;
\item
If $F$ is a finite set of cardinality $n$, then $\num(F)=n$.
\end{itemize}

\smallskip
Given an elementary numerosity and a ``measure unit" $\beta\in\F$,
there is a canonical way to construct a (real-valued)
finitely additive measure.

\medskip
\begin{definition}
If $\num:\p(\Omega)\to[0,+\infty)_\F$ is an elementary numerosity,
and $\beta\in\F$ is a positive number, the map
$\num_\beta:\p(\Omega)\to[0,+\infty]_\R$
is defined by setting
$$\num_\beta(A)\ =\ \sh\left(\frac{\num(A)}{\beta}\right).$$
\end{definition}

\medskip
\begin{proposition}
$\num_\beta$ is a finitely additive measure. Moreover,
$\num_\beta$ is non-atomic if and only if $\beta$ is an infinite number.
\end{proposition}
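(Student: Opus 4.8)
The plan is to verify the defining clauses of a finitely additive measure directly from the two axioms for $\num$, and then to read off the value of $\num_\beta$ on finite sets in order to settle the non-atomicity equivalence.

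First I would check that $\num_\beta$ has the correct codomain. Since $\num(A)\in[0,+\infty)_\F$ and $\beta>0$, the quotient $\num(A)/\beta$ is a nonnegative element of $\F$, so its shadow $\sh(\num(A)/\beta)$ lies in $[0,+\infty]_\R$, as required. The normalization $\num_\beta(\emptyset)=0$ is immediate: since $\num(A)=0$ iff $A=\emptyset$, we get $\num(\emptyset)=0$ and hence $\sh(0/\beta)=\sh(0)=0$. As the domain is the full powerset $\p(\Omega)$, which is trivially an algebra of sets, it only remains to establish finite additivity.

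For finite additivity, let $A,B\subseteq\Omega$ be disjoint. Additivity of $\num$ gives $\num(A\cup B)=\num(A)+\num(B)$, so
$$\num_\beta(A\cup B)=\sh\!\left(\frac{\num(A)}{\beta}+\frac{\num(B)}{\beta}\right).$$
Writing $\xi=\num(A)/\beta$ and $\zeta=\num(B)/\beta$, both nonnegative, the claim reduces to $\sh(\xi+\zeta)=\sh(\xi)+\sh(\zeta)$. When $\xi$ and $\zeta$ are both finite this is exactly the additivity of the standard part recorded in the preliminaries (the shadow $\sh$ agreeing with $\st$ on finite elements). The one point that needs care is when at least one of them is infinite: then $\xi+\zeta$ is again positive infinite, so $\sh(\xi+\zeta)=+\infty$, while the right-hand side equals $+\infty$ by the convention $x+\infty=+\infty$ fixed at the outset. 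I expect this bookkeeping over $[0,+\infty]_\R$ to be the only genuinely delicate step.

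Finally, for the non-atomicity equivalence I would use that finite sets have an explicit numerosity: a set $F$ of cardinality $n$ satisfies $\num(F)=n$. If $\beta$ is infinite, then $n/\beta$ is infinitesimal for every $n\in\N$, whence $\num_\beta(F)=\sh(n/\beta)=0$; thus every finite set is null and $\num_\beta$ is non-atomic. Conversely, if $\beta$ is finite I would test a singleton $\{x\}$, for which $\num_\beta(\{x\})=\sh(1/\beta)$: when $\st(\beta)>0$ this equals $1/\st(\beta)>0$, and when $\beta$ is infinitesimal the reciprocal $1/\beta$ is infinite, so $\sh(1/\beta)=+\infty$. In either subcase the singleton is a finite set of nonzero measure, so $\num_\beta$ fails to be non-atomic. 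This proves the contrapositive and completes the argument.
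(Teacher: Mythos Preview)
Your argument is correct and follows the same route as the paper's proof: verify additivity via $\sh(\xi+\zeta)=\sh(\xi)+\sh(\zeta)$, and settle the non-atomicity equivalence by computing $\num_\beta(\{x\})=\sh(1/\beta)$. You are simply more explicit than the paper in two places---the case where one of $\xi,\zeta$ is infinite (the paper silently applies the finite-case identity), and the subcase $\beta$ infinitesimal---but these are just details the paper elides rather than a different strategy.
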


\begin{proof}
For all disjoint $A,B\subseteq\Omega$, one has:
\begin{eqnarray*}
\nonumber
\num_\beta(A\cup B) &=&
\st\left(\frac{\num(A\cup B)}{\beta}\right)\ =\
\st\left(\frac{\num(A)}{\beta}+\frac{\num(B)}{\beta}\right)
\\
\nonumber
{} &=&
\st\left(\frac{\num(A)}{\beta}\right)+\st\left(\frac{\num(B)}{\beta}\right)\ =\
\num_\beta(A)+\num_\beta(B).
\end{eqnarray*}

Notice that the measure $\num_\beta$ is non-atomic if and only if
$\num_\beta(\{x\})=\sh(1/\beta)=0$, and this
holds if and only if $\beta$ is infinite.
\end{proof}

\medskip
The relevant result about elementary numerosities
that we will use in the sequel,
is the following representation theorem, that
was proved in \cite{bbd}:

\medskip
\begin{theorem}\label{numeasure}
Let $(\Omega,\A,\mu)$ be a non-atomic finitely additive measure
on the infinite set $\Omega$, and let $\B\subseteq\A$ be a subalgebra
that does not contain nonempty null sets.
Then there exist

\smallskip
\begin{itemize}
\item
a non-Archimedean field $\F\supset\R$\,;

\smallskip
\item
an elementary numerosity $\num:\p(\Omega)\to[0,+\infty)_\F$\,;
\end{itemize}

\smallskip
\noindent
such that:

\smallskip
\begin{enumerate}
\item
$\mu(B)=\mu(B')\Leftrightarrow\num(B)=\num(B')$
for all $B,B'\in\B$ of finite measure;
\item
For every set $Z\in\A$ of positive finite measure,
if $\beta={\num(Z)}/{\mu(Z)}$ then $\mu(A)=\num_\beta(A)$
for all $A\in\A$.
\end{enumerate}
\end{theorem}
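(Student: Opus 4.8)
The plan is to realize $\num$ as a limit of finite counting functions taken along a suitable ultrafilter, and to reduce the two compatibility requirements to a single finite combinatorial lemma. Concretely, I would take as index set the directed family of finite subsets $\lambda$ of $\Omega$, build the field $\F$ as an ultrapower $\R^{\pfin(\Omega)}/\U$ for a fine ultrafilter $\U$ (one containing every cone $\{\lambda : \lambda \supseteq \lambda_0\}$), and write $\olim$ for the induced limit; this $\F$ is a non-Archimedean ordered field extending $\R$. I would then set $\num(A) = \olim |A \cap \lambda|$. Since $|\{x\} \cap \lambda| = 1$ as soon as $\lambda \supseteq \{x\}$, and $|(A \cup A') \cap \lambda| = |A \cap \lambda| + |A' \cap \lambda|$ whenever $A \cap A' = \emptyset$, the function $\num$ is automatically an elementary numerosity, for any choice of $\U$. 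All the content of the theorem therefore lies in choosing the finite sets $\lambda$ (equivalently, refining $\U$) so that the counting measures they induce approximate $\mu$ and match the equal-measure pairs of $\B$ exactly.

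The heart of the argument is a finite approximation lemma, which I would prove first. Fix a reference set $Z \in \A$ with $0 < \mu(Z) < \infty$. Given finitely many sets $A_1, \ldots, A_k \in \A$, finitely many pairs $(B_1, B_1'), \ldots, (B_r, B_r')$ in $\B$ with $\mu(B_j) = \mu(B_j')$, a finite set $\lambda_0 \subseteq \Omega$, and $n \in \N$, there should exist a finite $\lambda \supseteq \lambda_0$ and an integer $N$ such that $\big| |A_i \cap \lambda| - N\mu(A_i) \big| < N/n$ for every $A_i$ of finite measure, $|A_i \cap \lambda| > nN$ for every $A_i$ of infinite measure, and $|B_j \cap \lambda| = |B_j' \cap \lambda|$ exactly. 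Granting this, I would let $\U$ refine the filter generated by the cones together with the families of ``good'' witnesses produced by the lemma; the finite intersection property of this filter is exactly the content of the lemma, since any finite collection of requirements is met by a single $\lambda$. Setting $\beta = \num(Z)/\mu(Z)$, the proportionality estimates yield $\st(\num(A)/\beta) = \mu(A)$ for every $A \in \A$, which is property $(2)$ and also shows that $\beta$ is independent of $Z$ up to infinitesimals; the exact-count clause yields $\num(B) = \num(B')$ whenever $\mu(B) = \mu(B')$, and the converse implication in $(1)$ follows from $(2)$, since $\num(B) = \num(B')$ forces $\mu(B) = \st(\num(B)/\beta) = \st(\num(B')/\beta) = \mu(B')$.

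Proving the finite lemma is the main obstacle, and this is where the hypotheses enter. I would pass to the finite algebra $\G$ generated by the $A_i, B_j, B_j'$, with atoms $P_1, \ldots, P_m$. The equal-measure conditions become a homogeneous linear system $\sum_{P_l \subseteq B_j} c_l = \sum_{P_l \subseteq B_j'} c_l$ in the unknown atom-counts $c_l = |P_l \cap \lambda|$, with coefficients in $\{0, \pm 1\}$, and the real point $c_l = N\mu(P_l)$ solves it precisely because $\mu(B_j) = \mu(B_j')$. Since the solution space is $\Q$-rational, this real point is a limit of rational solutions, so for large $N$ there are integer solutions with each $c_l$ within $N/n$ of $N\mu(P_l)$, giving the required proportionality for the $A_i$. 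The key structural point is that a positive-measure atom is infinite, because $\mu$ is non-atomic, hence it contains enough points to realize any prescribed count $c_l$; meanwhile the hypothesis that $\B$ contains no nonempty null set guarantees that every nonempty $B_j$ has a positive-measure atom into which the balancing counts can be placed, and forces $B_j = B_j' = \emptyset$ whenever $\mu(B_j) = 0$, making that constraint vacuous. Finally I would absorb the finitely many prescribed points of $\lambda_0$ into the counts of positive-measure atoms: they perturb each $c_l$ by at most $|\lambda_0|$, which is negligible against $N$ for the proportionality estimates and can be compensated exactly inside the infinite positive-measure atoms so as to preserve the equalities $|B_j \cap \lambda| = |B_j' \cap \lambda|$. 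Sets of infinite measure are handled by simply inserting more than $nN$ points, again using that they are infinite. Choosing $N$ large enough then completes the lemma, and with it the theorem.
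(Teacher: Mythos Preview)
The paper does not prove this theorem at all: it is quoted verbatim from \cite{bbd} and only \emph{used} in the subsequent sections. So there is no ``paper's own proof'' to compare against; what can be said is whether your sketch stands on its own and whether it is in the spirit of the numerosity literature that the paper draws on.

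Your approach---realize $\num$ as a $\U$-limit of finite counting functions $\lambda\mapsto |A\cap\lambda|$ over $\pfin(\Omega)$, and encode the two compatibility requirements as a filter of ``good'' finite samples whose finite-intersection property is a combinatorial lemma---is exactly the kind of construction used in this line of work, and the high-level logic is correct. In particular, your derivation of property~(2) from the proportional estimates (and the observation that the resulting $\beta$ is well-defined up to an infinitesimal factor, so that every admissible $Z$ works) and of the converse in~(1) from~(2) are both fine.

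The one place where your sketch is genuinely thin is the finite lemma, specifically the interaction of the prescribed points $\lambda_0$ with zero-measure atoms of the algebra generated by the $A_i,B_j,B_j'$. You say the perturbation ``can be compensated exactly inside the infinite positive-measure atoms,'' but as stated this needs the inhomogeneous system $M_L c_L=-M_{L^c}e$ to be consistent, which is not immediate row by row. The clean way to close this is to observe that the equal-count constraints depend only on the atoms of the \emph{sub}algebra $\B_0\subseteq\B$ generated by the $B_j,B_j'$ alone: each $B_j$ is a union of $\B_0$-atoms $Q_i$, and $P_l\subseteq B_j$ iff the unique $Q_i\supseteq P_l$ satisfies $Q_i\subseteq B_j$. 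Since $\B$ has no nonempty null sets, every nonempty $Q_i$ has $\mu(Q_i)>0$, so the linear system lives entirely on positive-measure atoms; you first solve for the aggregate counts $C_i=\sum_{P_l\subseteq Q_i}c_l$ near $N\mu(Q_i)$, and only then distribute $C_i$ among the $P_l\subseteq Q_i$, absorbing the fixed contributions $|\lambda_0\cap P_l|$ from any zero-measure $P_l$ into the (infinite) positive-measure atoms of the same $Q_i$. With this refinement your argument goes through.
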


\section{Numerosities and inner measures}

In this section we will use elementary numerosities
to prove a general existence result about ``inner'' measures.

\medskip
\begin{theorem} \label{inner measure theorem}
Let $\A$ be an algebra of subsets of $\Omega$ and let
$\mu:\A\to[0,+\infty]_\R$ be a non-atomic pre-measure.
Assume that $\mu$ is non-trivial, in the sense that
there are sets $Z\in A$ with $0<\mu(Z)<+\infty$.
Then, along with the associated outer measure $\overline{\mu}$,
there exists an ``inner'' finitely additive measure
$$\underline{\mu}:\p(\Omega)\to[0,+\infty]_\R$$
such that:

\smallskip
\begin{enumerate}
\item
$\underline{\mu}(C)=\overline{\mu}(C)$ for all
$C\in\mathfrak{C}_\mu$, the Caratheodory $\sigma$-algebra associated to $\mu$.
In particular, $\underline{\mu}(A)=\mu(A)=\overline{\mu}(A)$ for all
$A\in\mathfrak{A}$.

\smallskip
\item
$\underline{\mu}(X)\le\overline{\mu}(X)$ for all
$X\subseteq\Omega$.
\end{enumerate}
\end{theorem}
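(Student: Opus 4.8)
The plan is to deploy the representation Theorem \ref{numeasure} not on $(\Omega,\A,\mu)$ itself, but on its Caratheodory completion, so that agreement with $\overline{\mu}$ is obtained on the whole of $\mathfrak{C}_{\overline{\mu}}$ and not merely on $\A$. Write $\mu^{*}$ for the restriction of $\overline{\mu}$ to $\mathfrak{C}_{\overline{\mu}}$; by the Caratheodory theorem recalled above this is a (countably additive) measure extending $\mu$. I would apply Theorem \ref{numeasure} to the finitely additive measure $(\Omega,\mathfrak{C}_{\overline{\mu}},\mu^{*})$ --- only its clause (2) is needed, so the auxiliary subalgebra $\B$ may be taken trivial --- obtaining a non-Archimedean field $\F$ and an elementary numerosity $\num\colon\p(\Omega)\to[0,+\infty)_\F$. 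By non-triviality there is $Z\in\A$ with $0<\mu(Z)<+\infty$; such a $Z$ is infinite, since a finite set would be null, so $\Omega$ is infinite as the theorem requires. Setting $\beta=\num(Z)/\mu^{*}(Z)$, I would simply define $\underline{\mu}:=\num_\beta$. By the preceding Proposition, $\num_\beta$ is automatically a finitely additive measure on all of $\p(\Omega)$, so only properties (1) and (2) remain to be checked.

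Property (1) is then immediate from clause (2) of Theorem \ref{numeasure} applied to $\mu^{*}$: for every $C\in\mathfrak{C}_{\overline{\mu}}$ one gets $\underline{\mu}(C)=\num_\beta(C)=\mu^{*}(C)=\overline{\mu}(C)$. Since $\A\subseteq\mathfrak{C}_{\overline{\mu}}$ and $\overline{\mu}$ extends $\mu$, this specializes to $\underline{\mu}(A)=\mu(A)=\overline{\mu}(A)$ for $A\in\A$, which is the ``in particular'' clause.

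For property (2) I would exploit the regularity of $\overline{\mu}$ together with the monotonicity of the finitely additive measure $\underline{\mu}$. Given an arbitrary $X\subseteq\Omega$, regularity furnishes $C\in\mathfrak{C}_{\overline{\mu}}$ with $X\subseteq C$ and $\overline{\mu}(X)=\overline{\mu}(C)$. Monotonicity of $\underline{\mu}$ and property (1) then give
$$\underline{\mu}(X)\ \le\ \underline{\mu}(C)\ =\ \overline{\mu}(C)\ =\ \overline{\mu}(X).$$
This step uses only finite additivity, so the fact that $\num_\beta$ is merely finitely (not countably) additive creates no difficulty here.

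The step I expect to be the main obstacle is justifying the application of Theorem \ref{numeasure} to $\mu^{*}$: the theorem demands that $(\Omega,\mathfrak{C}_{\overline{\mu}},\mu^{*})$ be non-atomic, i.e.\ that every finite subset of $\Omega$ belonging to $\mathfrak{C}_{\overline{\mu}}$ be $\mu^{*}$-null, equivalently that every $\overline{\mu}$-measurable singleton have outer measure zero. This is more delicate than the non-atomicity of $\mu$ on $\A$: a point $x$ may fail to lie in any finite set of $\A$, yet a decreasing sequence of $\A$-sets shrinking to $\{x\}$ could a priori render $\{x\}$ measurable with positive $\mu^{*}$-mass. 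One must therefore use the pre-measure hypothesis, arguing via outer approximation by $\A$-sets and continuity from above that such measurable atoms do not occur, or else isolate any genuine atoms of $\mu^{*}$ and assign them point masses directly --- harmless both for finite additivity and for the inequality in (2) --- reserving the numerosity construction for the non-atomic part. Modulo this verification, the assignment $\underline{\mu}=\num_\beta$ together with the two short arguments above establishes (1) and (2).
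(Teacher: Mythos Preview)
Your argument is essentially the paper's: apply Theorem~\ref{numeasure} to the Caratheodory extension of $\mu$, set $\underline{\mu}=\num_\beta$, and read off property~(1) from clause~(2) of that theorem. The only difference is in the verification of~(2): you invoke the regularity of $\overline{\mu}$ (already recorded in the preliminaries) to produce a measurable hull $C\supseteq X$ with $\overline{\mu}(C)=\overline{\mu}(X)$ and conclude in one line, whereas the paper goes back to the definition of $\overline{\mu}$ via an $\epsilon$-cover $\{A_n\}\subseteq\A$ and uses $\sigma$-subadditivity; your route is marginally cleaner but the two are interchangeable. As for the non-atomicity of $\mu^{*}$ on $\mathfrak{C}_{\overline{\mu}}$ that you flag as the main obstacle, the paper's proof does not address it either---it simply applies Theorem~\ref{numeasure} to the extended measure without comment---so on this point your proposal and the paper are on equal footing.
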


\begin{proof}
By \emph{Caratheodory extension theorem}, the restriction
of $\overline{\mu}$ to $\mathfrak{C}_\mu$ is a measure
that agrees with $\mu$ on $\A$.
Now we apply Theorem \ref{numeasure} to the measure
$(\mathfrak{C}_\mu,\A,\overline{\mu})$, and
obtain the existence of an elementary numerosity
$\num:\p(\Omega)\to[0,+\infty)_\F$.
By property (2) in the Theorem,
if we pick any number $\beta=\frac{\num(Z)}{\mu(Z)}$ where
$0\mu(Z)<+\infty$, then $\num_\beta(C)=\overline{\mu}(C)$ for all
$C\in\mathfrak{C}_\mu$. We claim that
$\underline{\mu}=\num_\beta:\p(\Omega)\to[0,+\infty]_\R$
is the desired ``inner" finitely additive measure.

\smallskip
Property $(1)$ is trivially satisfied by our definition
of $\underline{\mu}$, so let us turn to $(2)$.
For every $X\subseteq\Omega$, by definition of outer measure we have that
for every $\epsilon>0$
there exists a countable union $A=\bigcup_{n=1}^\infty A_n$
of sets $A_n\in\A$ such that $A\supseteq X$ and
$\sum_{n=1}^\infty\mu(A_n)\le\overline{\mu}(X)+\epsilon$.
Notice that $A$ belongs to the $\sigma$-algebra generated by $\A$,
and hence $A\in\mathfrak{C}_\mu$. In consequence,
$\underline{\mu}(A)=\num_\beta(A)=\overline{\mu}(A)$.
Finally, by monotonicity of the finitely additive measure $\underline{\mu}$,
and by $\sigma$-subadditivity of the outer measure $\overline{\mu}$, we obtain:
$$\underline{\mu}(X)\ \le\ \underline{\mu}(A)\ =\ \overline{\mu}(A)\ \le\
\sum_{n=1}^\infty\overline{\mu}(A_n)\ =\ \sum_{n=1}^\infty\mu(A_n)\ \le\
\overline{\mu}(X)+\epsilon.$$
As $\epsilon>0$ is arbitrary, the desired inequality
$\underline{\mu}(X)\le\overline{\mu}(X)$ follows.
\end{proof}

\medskip
It seems of some interest to investigate the properties of the
extension of the Caratheodory algebra given by family of all sets
for which the outer measure coincides with the above ``inner measure":
$$\mathfrak{C}(\num_\beta)\ =\ \left\{X\subseteq\Omega\mid\underline{\mu}(X)=
\overline{\mu}(X)\right\}.$$
Clearly, the properties of $\mathfrak{C}(\num_\beta)$
may depend on the choice of the
elementary numerosity $\num$.

\smallskip
Theorem \ref{inner measure theorem} ensures that
the inclusion $\C_\mu \subseteq \C(\num_\beta)$ always holds.
Moreover, this inclusion is an equality if and only if all $X \not \in \C_\mu$
satisfy the inequality $\underline{\mu}(X) < \overline{\mu}(X)$.
It turns out that, when $\mu(\Omega) < +\infty$, this property is equivalent to a number of other
statements.

\medskip
\begin{proposition} \label{proposition equivalences}
If $\mu(\Omega) < + \infty$, then the following are equivalent:

\smallskip
\begin{enumerate}
\item
$\C_\mu=\C(\num_\beta)$.

\smallskip
\item
$X\not\in \C_\mu \Rightarrow \underline{\mu}(X) < \overline{\mu}(X)$
and $\underline{\mu}(X^c) < \overline{\mu}(X^c)$.

\smallskip
\item
$\underline{\mu}(X)=\overline{\mu}(X)\Longleftrightarrow\underline{\mu}(X^c)=
\overline{\mu}(X^c)$.

\smallskip
\item
$\underline{\mu}(X) = 0 \Longleftrightarrow \overline{\mu}(X) = 0$.
\end{enumerate}

\smallskip
\noindent
If $\mu(\Omega) = + \infty$, then
$(1)\Leftrightarrow (2) \Rightarrow (3) \Rightarrow (4)$.
\end{proposition}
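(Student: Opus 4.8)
The plan is to prove the cycle $(1)\Leftrightarrow(2)$, $(2)\Rightarrow(3)\Rightarrow(4)$ without any hypothesis on the total mass, and then to close the loop with $(4)\Rightarrow(1)$ under the assumption $\mu(\Omega)<+\infty$, isolating the one step where finiteness is genuinely used. Throughout I would rely on four standing facts, all supplied by Theorem \ref{inner measure theorem} and its proof: that $\underline{\mu}(X)\le\overline{\mu}(X)$ for every $X$; that $\C_\mu\subseteq\C(\num_\beta)$, where by definition $X\in\C(\num_\beta)$ means exactly $\underline{\mu}(X)=\overline{\mu}(X)$; that $\C_\mu$ is a $\sigma$-algebra, hence closed under complementation; and that $\underline{\mu}=\num_\beta$ is finitely additive with $\underline{\mu}(\Omega)=\overline{\mu}(\Omega)$ (since $\Omega\in\C_\mu$). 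I would also record at the outset that $(3)$ asserts precisely that $\C(\num_\beta)$ is closed under complementation.

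For $(1)\Leftrightarrow(2)$: since the inclusion $\C_\mu\subseteq\C(\num_\beta)$ is automatic, $(1)$ is equivalent to the reverse inclusion, which by $\underline{\mu}\le\overline{\mu}$ and contraposition reads $X\notin\C_\mu\Rightarrow\underline{\mu}(X)<\overline{\mu}(X)$. Because $\C_\mu$ is closed under complements, $X\notin\C_\mu$ holds iff $X^c\notin\C_\mu$, so this single implication already delivers both strict inequalities of $(2)$; conversely $(2)$ trivially contains that implication, giving $(1)$. For $(2)\Rightarrow(3)$ I would invoke $(2)\Leftrightarrow(1)$: if $\C(\num_\beta)=\C_\mu$, then $\C(\num_\beta)$ is a $\sigma$-algebra and in particular closed under complementation, which is exactly $(3)$. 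None of this uses finiteness.

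For $(3)\Rightarrow(4)$ the only nontrivial direction is $\underline{\mu}(X)=0\Rightarrow\overline{\mu}(X)=0$, the converse being immediate from $\underline{\mu}\le\overline{\mu}$. Assuming $\underline{\mu}(X)=0$, finite additivity together with $\underline{\mu}(\Omega)=\overline{\mu}(\Omega)$ forces $\underline{\mu}(X^c)=\overline{\mu}(\Omega)$; then monotonicity $\overline{\mu}(X^c)\le\overline{\mu}(\Omega)$ and the bound $\underline{\mu}(X^c)\le\overline{\mu}(X^c)$ squeeze $\overline{\mu}(X^c)=\underline{\mu}(X^c)$, whether this common value is finite or $+\infty$. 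Hence $X^c\in\C(\num_\beta)$, and $(3)$ yields $X\in\C(\num_\beta)$, i.e. $\overline{\mu}(X)=\underline{\mu}(X)=0$. This squeeze is insensitive to the value of $\overline{\mu}(\Omega)$, which is exactly why $(3)\Rightarrow(4)$ persists in both the finite and the infinite case.

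Finally, for $(4)\Rightarrow(1)$ when $\mu(\Omega)<+\infty$, I would take $X$ with $\underline{\mu}(X)=\overline{\mu}(X)$ and use regularity of $\overline{\mu}$ to select $C\in\C_\mu$ with $X\subseteq C$ and $\overline{\mu}(C)=\overline{\mu}(X)$. Finite additivity of $\underline{\mu}$ and the equality $\underline{\mu}(C)=\overline{\mu}(C)$ give $\underline{\mu}(C\setminus X)=\overline{\mu}(C)-\underline{\mu}(X)=0$, whereupon $(4)$ yields $\overline{\mu}(C\setminus X)=0$; completeness of the Caratheodory measure places $C\setminus X$ in $\C_\mu$, so $X=C\setminus(C\setminus X)\in\C_\mu$, proving $\C(\num_\beta)\subseteq\C_\mu$ and hence $(1)$. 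I expect the main obstacle to be precisely this step: the subtraction $\underline{\mu}(C\setminus X)=\overline{\mu}(C)-\underline{\mu}(X)$ is legitimate only when the quantities involved are finite, and it is here — and only here — that $\mu(\Omega)<+\infty$ is indispensable. When $\overline{\mu}(X)=+\infty$ one cannot deduce $\underline{\mu}(C\setminus X)=0$, which is why the loop need not close in the infinite case and the proposition claims only $(1)\Leftrightarrow(2)\Rightarrow(3)\Rightarrow(4)$ there.
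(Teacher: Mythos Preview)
Your argument is correct and follows essentially the same route as the paper: the same observation that $\C_\mu$ is closed under complements gives $(1)\Leftrightarrow(2)$, the same squeeze argument gives $(3)\Rightarrow(4)$, and the closing step uses regularity of $\overline{\mu}$ to pick a measurable cover $C\supseteq X$ with $\overline{\mu}(C)=\overline{\mu}(X)$ and then exploits finiteness to subtract. The only cosmetic differences are that the paper argues $(4)\Rightarrow(2)$ by contradiction (invoking a characterization of $\C_\mu$ from \cite{Yeh} to get $\overline{\mu}(A\setminus X)>0$ when $X\notin\C_\mu$), whereas you prove $(4)\Rightarrow(1)$ directly and appeal to completeness of the Caratheodory measure to place $C\setminus X$ in $\C_\mu$; these are two sides of the same coin.
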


\begin{proof}
We have already seen that $(1)$ and $(2)$ are equivalent.

\smallskip
$(2)\Rightarrow(3)$.
Suppose towards a contradiction that $(2)$ holds but $(3)$ is false.
The latter hypothesis ensures the existence of a set $X$
such that $\underline{\mu}(X)=\overline{\mu}(X)$ and
$\underline{\mu}(X^c)<\overline{\mu}(X^c)$.
Thanks to Theorem \ref{inner measure theorem}, we deduce that
$X\not\in\C_\mu$. By $(2)$ we get the contradiction
$\underline{\mu}(X)<\overline{\mu}(X)$.

\smallskip
$(3)\Rightarrow(4)$.
The implication $\overline{\mu}(X)=0\Rightarrow\underline{\mu}(X)=0$
is always true.
On the other hand, if $\underline{\mu}(X)=0$, then
$\underline{\mu}(X^c)=\underline{\mu}(\Omega)=\overline{\mu}(\Omega)$.
By the inequality $\underline{\mu}(X^c)\leq\overline{\mu}(X^c)$,
we deduce $\overline{\mu}(X^c)=\overline{\mu}(\Omega)=\underline{\mu}(X^c)$ and,
thanks to $(3)$, also $\overline{\mu}(X)=0$ follows.

\smallskip
$(4)\Rightarrow(2)$, under the hypothesis that $\mu(\Omega) < + \infty$.
Suppose towards a contradiction that $(4)$ holds but $(2)$ is false.
The latter hypothesis ensures the existence of a set $X\not\in\C_\mu$
satisfying $\underline{\mu}(X)=\overline{\mu}(X)$ and
$\underline{\mu}(X^c)<\overline{\mu}(X^c)$.
Thanks to Propositions 20.9 and 20.11 of \cite{Yeh},
we can find a set $A\in\mathfrak{C}_\mu$ satisfying
$A\supset X$, $\overline{\mu}(A)=\overline{\mu}(X)$ and
$\overline{\mu}(A\setminus X)> 0$.
From the hypothesis $\underline{\mu}(X)=\overline{\mu}(X)$ we
obtain the following equalities:
$$\underline{\mu}(X)\ =\ \overline{\mu}(X)\ =\
\overline{\mu}(A)\ =\ \underline{\mu}(A).$$
The above equalities and the hypothesis 
$\mu(\Omega) < + \infty$ imply $\underline{\mu}(A\setminus X)=0$.
By $(4)$, we obtain the contradiction
$\overline{\mu}(A\setminus X)=0$.
\end{proof}

\section{Numerosities and Lebesgue measure}

In this section, we show that elementary numerosities exist which
are consistent with Lebesgue measure in a strong sense.
Precisely, the following result holds:

\medskip
\begin{theorem}
Let $(\R,\mathfrak{L},\mu_L)$ be the \emph{Lebesgue measure} over $\R$.
Then there exists an elementary numerosity
$\num:\p(\R)\rightarrow[0,+\infty)_\F$ such that:

\smallskip
\begin{enumerate}
\item
$\num([x,x+a))=\num([y,y+a))$ for all $x,y\in\R$ and for all $a>0$.

\smallskip
\item
$\num([x,x+a))=a\cdot\num([0,1))$ for all rational numbers $a>0$.

\smallskip
\item
$\st\left(\frac{\num(X)}{\num([0,1))}\right)=\mu_L(X)$ for all $X\in\mathfrak{L}$.

\smallskip
\item
$\st\left(\frac{\num(X)}{\num([0,1))}\right)\le
\overline{\mu}_L(X)$ for all $X\subseteq\R$.
\end{enumerate}
\end{theorem}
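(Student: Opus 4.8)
The plan is to apply the representation Theorem \ref{numeasure} to the Lebesgue measure $(\R,\mathfrak{L},\mu_L)$, taking as the distinguished subalgebra $\B$ the algebra of \emph{elementary figures}, i.e. the subalgebra of $\mathfrak{L}$ generated by the bounded half-open intervals $[a,b)$. This $\B$ consists of finite unions of intervals of the form $[a,b)$ together with the rays forced by closure under complementation, and its crucial feature is that every nonempty elementary figure contains a nondegenerate interval and hence has strictly positive Lebesgue measure; in particular $\B$ contains no singletons and no nonempty null set, exactly as required by the hypotheses of Theorem \ref{numeasure}. Since $\mu_L$ is non-atomic and $[0,1)$ has finite positive measure, the theorem yields a non-Archimedean field $\F\supset\R$ and an elementary numerosity $\num:\p(\R)\to[0,+\infty)_\F$ satisfying its two conclusions. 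Throughout I fix the scaling constant $\beta=\num([0,1))$, noting that $\beta=\num([0,1))/\mu_L([0,1))$ because $\mu_L([0,1))=1$.

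Property $(1)$ is then immediate: the intervals $[x,x+a)$ and $[y,y+a)$ lie in $\B$ and have equal finite Lebesgue measure $a$, so conclusion $(1)$ of Theorem \ref{numeasure} forces $\num([x,x+a))=\num([y,y+a))$. For property $(2)$ the exact rational proportionality is \emph{not} delivered directly by the representation theorem, and I would extract it instead from finite additivity together with the translation invariance just established. Writing $a=p/q$ with $p,q\in\N$, I first partition $[0,1)$ into the $q$ length-$1/q$ intervals $[i/q,(i+1)/q)$; additivity and property $(1)$ give $\num([0,1))=q\,\num([0,1/q))$, whence $\num([0,1/q))=\beta/q$. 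Partitioning $[x,x+p/q)$ into $p$ consecutive length-$1/q$ intervals and applying the same two facts yields $\num([x,x+p/q))=p\,\num([0,1/q))=(p/q)\,\beta$, which is property $(2)$.

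For property $(3)$ I apply conclusion $(2)$ of Theorem \ref{numeasure} with $Z=[0,1)$: since $\beta=\num(Z)/\mu_L(Z)$, we obtain $\mu_L(A)=\num_\beta(A)=\st(\num(A)/\beta)$ for every $A\in\mathfrak{L}$, which is precisely the required identity. Property $(4)$ then follows from the inner-measure argument already used in Theorem \ref{inner measure theorem}: the map $\num_\beta$ is a monotone finitely additive measure on $\p(\R)$, and for any $X\subseteq\R$ and $\epsilon>0$ one covers $X$ by a measurable set $A=\bigcup_n A_n\supseteq X$ with $A_n\in\mathfrak{L}$ and $\sum_n\mu_L(A_n)\le\overline{\mu}_L(X)+\epsilon$. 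Monotonicity and $(3)$ then give $\st(\num(X)/\beta)=\num_\beta(X)\le\num_\beta(A)=\mu_L(A)\le\sum_n\mu_L(A_n)\le\overline{\mu}_L(X)+\epsilon$, and letting $\epsilon\to0$ establishes $(4)$.

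The main obstacle is conceptual rather than computational: one must choose $\B$ large enough to contain all half-open intervals, so that conclusion $(1)$ of the representation theorem supplies translation invariance, yet small enough to avoid nonempty null sets, so that the theorem applies at all; in particular $\B$ can contain neither singletons nor sets such as $\Q$. Given this choice, properties $(1)$, $(3)$, and $(4)$ are essentially read off from Theorems \ref{numeasure} and \ref{inner measure theorem}, whereas the genuinely new step is deriving the \emph{exact} rational scaling $(2)$ from finite additivity, the representation theorem guaranteeing only that equimeasurable figures receive equal numerosities.
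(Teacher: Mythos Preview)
Your proof is correct and follows essentially the same route as the paper: apply Theorem~\ref{numeasure} with $\A=\mathfrak{L}$ and $\B$ the (ring-)algebra generated by bounded half-open intervals, read off $(1)$ and $(3)$, derive the exact rational scaling $(2)$ by the same partition-and-additivity argument, and obtain $(4)$ via the inner-measure estimate of Theorem~\ref{inner measure theorem}. The only cosmetic differences are that the paper cites Theorem~\ref{inner measure theorem} directly rather than reproducing its covering argument, and that under the paper's definition of ``algebra'' (closed under \emph{relative} complements only) $\B$ consists just of finite unions of bounded half-open intervals, so your aside about rays is unnecessary---though harmless, since the crucial hypothesis that $\B$ contain no nonempty null set holds either way.
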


\begin{proof}
Notice that the family of half-open intervals
$$\ring{I}\ =\ \left\{[x,x+a)\mid x\in\R \ \&\ a>0\right\}$$
generates a subalgebra $\B\subset\ring{L}$ whose nonempty sets have
all finite positive measure.
Then, by combining Theorems \ref{numeasure} and \ref{inner measure theorem},
we obtain the existence of an elementary numerosity $\num:\p(\R)\to[0,+\infty)_\F$
such that, for $\beta=\num([0,1))=\frac{\num([0,1))}{\mu_L([0,1))}$, one has:

\smallskip
\begin{enumerate}
\item[$(i)$]
$\num(X)=\num(Y)$ for all $X,Y\in\B$ with $\mu_L(X)=\mu_L(Y)$\,;

\smallskip
\item[$(ii)$]
$\num_\beta(X)=\mu_L(X)$ for all $X\in\mathfrak{L}$\,;

\smallskip
\item[$(iii)$]
$\num_\beta(X)\le\overline{\mu}_L(X)$ for all $X\subseteq\R$.
\end{enumerate}

\smallskip
Since $[x,x+a)\in\B$ for all $x\in\R$ and for all $a>0$, property $(1)$
directly follows from $(i)$. In order to prove $(2)$,
it is enough to show that $\num([0,a))=a\cdot\num([0,1))$ for all positive
$a\in\Q$. Given $p,q\in\N$, by $(1)$ and additivity we have that
$$\num\left(\left[0,\frac{p}{q}\right)\right)\ =\
\num\left(\bigsqcup_{i=0}^{p-1}\left[\frac{i}{q},\frac{i+1}{q}\right)\right)\ =\
\sum_{i=0}^{p-1}\num\left(\left[\frac{i}{q},\frac{i+1}{q}\right)\right)\ =\
p\,\cdot\num\left(\left[0,\frac{1}{q}\right)\right).$$
In particular, for $p=q$ we get that $\num([0,1))=q\cdot\num([0,1/q))$,
and hence property $(2)$ follows:
$$\num\left(\left[0,\frac{p}{q}\right)\right)\ =\
\frac{p}{q}\cdot\num\left([0,1)\right).$$

Finally, $(ii)$ and $(iii)$ directly correspond to properties
$(3)$ and $(4)$, respectively.
\end{proof}

\medskip
\begin{remark}
Let $\{X_n\mid n\in\N\}$ be a countable family of isometric,
pairwise disjoint, non-Lebesgue measurable sets such that the union
$A=\bigcup_{n\in\N} X_n$ is measurable with positive finite measure.
(\emph{E.g.}, one can consider a \emph{Vitali set}
on $[0,1)$ and take the countable family of its rational
translations modulo 1.)
Let $\num$ be an elementary numerosity as given by the
above theorem, and consider the finitely additive
measure $\num_\beta$ with $\beta=\num(A)/\mu(A)$.
Then, one and only one of the following holds:

\smallskip
\begin{itemize}
\item
$\num_\beta(X_n)=0$ for all $n\in\N$. In this case,
the measure $\num_\beta$ is not $\sigma$-additive
because $\num_\beta(A)=\mu_L(A)>0$.

\smallskip
\item
$\num_\beta(X_n)=\epsilon>0$ for some $n\in\N$.
In this case, $\num_\beta$ is not invariant with respect to
isometries, as otherwise one would get the contradiction
$\mu_{L}(A)=\num_\beta(A)\geq\sum_{n\in\N}\num_\beta(X_n)=
\sum_{n\in\N}\epsilon=+\infty$.
\end{itemize}
\end{remark}

\section{Numerosities and probability of infinite coin tosses}

The last application of elementary numerosities that we present
in this paper is about the existence of a non-Archimedean probability
for infinite sequences of coin tosses, which we propose as a sound
mathematical model for Laplace's original ideas.

\smallskip
Recall the \emph{Kolmogorovian framework}:

\smallskip
\begin{itemize}
\item
The \emph{sample space}
$$\Omega\ =\ \{H,T\}^\N\ =\ \left\{\omega\mid\omega:\N\to\{H,T\}\right\}$$
is the set of sequences which take either $H$ (``head") or $T$ (``tail") as values.

\smallskip
\item
A \emph{cylinder set} of codimension $n$ is a set of the form:\footnote
{~We agree that $i_1<\ldots<i_n$.}
$$C_{(t_1,\ldots,t_n)}^{(i_1,\ldots,i_n)}\ =\
\left\{\omega\in\Omega\mid\omega(i_s)=t_s\ \text{ for }s=1,\ldots,n\right\}$$
\end{itemize}

\smallskip
From the probabilistic point of view, the cylinder set
$C_{(t_1,\ldots,t_n)}^{(i_1,\ldots,i_n)}$ represents the event that
for every $s=1,\ldots,n$, the $i_s$-th coin toss gives $t_s$ as outcome.
Notice that the family $\ring{C}$ of all finite disjoint unions of cylinder sets
is an algebra of sets over $\Omega$.

\smallskip
\begin{itemize}
\item
The function $\mu_C:\ring{C}\to[0,1]$ is defined by setting:
$$\mu_C\left(C_{(t_1,\ldots,t_n)}^{(i_1,\ldots,i_n)}\right)\ = \ 2^{-n}$$
for all cylindrical sets, and then it is extended to a generic element of $\ring{C}$ by finite additivity:
$$
	\mu_C\left(C_{(t_1,\ldots,t_n)}^{(i_1,\ldots,i_n)} \cup \ldots \cup C_{(u_1,\ldots,u_m)}^{(j_1,\ldots,i_m)}\right)
	=
	\mu_C\left(C_{(t_1,\ldots,t_n)}^{(i_1,\ldots,i_n)} \right) + \ldots + \mu_C \left( C_{(u_1,\ldots,u_m)}^{(j_1,\ldots,i_m)}\right).
$$
\end{itemize}

\smallskip
It is shown that $\mu_C$ is a probability pre-measure on the ring $\ring{C}$.

\smallskip
Let $\A$ be the $\sigma$-algebra generated by the ring of
cylinder sets $\ring{C}$, %\,;
and let $\mu:\A\to[0,1]$ be the unique probability measure that extends $\mu_C$,
as guaranteed by \emph{Caratheodory extension theorem}.
The triple $(\Omega,\A,\mu)$ is named the \emph{Kolmogorovian probability}
for \emph{infinite sequences of coin tosses}.

\smallskip
In \cite{nap} it is proved the existence of an elementary
numerosity $\num:\p(\Omega)\to[0,+\infty)_\F$ which is coherent
with the pre-measure $\mu_C$. Namely,
by considering the ratio $P(E)=\num(E)/\num(\Omega)$
between the numerosity of the given event $E$ and
the numerosity of the whole space $\Omega$, then
one obtains a \emph{non-Archimedean} finitely additive probability
$$P:\p(\Omega)\longrightarrow[0,1]_\F$$
that satisfies the following properties:

\smallskip
\begin{enumerate}
\item
If $F\subset\Omega$ is finite, then for all $E\subseteq\Omega$,
the conditional probability
$$P(E|F)\ =\ \frac{|E\cap F|}{|F|}.$$
\item
$P$ agrees with $\mu_C$ over all cylindrical sets:
$$P\left(C_{(t_1,\ldots,t_n)}^{(i_1,\ldots,i_n)}\right)\ =\
\mu_C\left(C_{(t_1,\ldots,t_n)}^{(i_1,\ldots,i_n)}\right)\ = \ 2^{-n}.$$
\end{enumerate}

\medskip
We are now able
to refine this result by showing that, up to infinitesimals,
we can take $P$ to agree with $\mu$ on the whole
$\sigma$-algebra $\A$.

\medskip
\begin{theorem}
Let $(\Omega,\A,\mu)$ be the Kolmogorovian probability
for infinite coin tosses. Then there exists an elementary numerosity
$\num:\p(\Omega)\rightarrow[0,+\infty)_\F$ such that the corresponding
non-Archimedean probability $P(E)=\num(E)/\num(\Omega)$
satisfies the above properties $(1)$ and $(2)$, along with the
additional condition:

\smallskip
\begin{enumerate}
\item[(3)]
$\st(P(E))=\mu(E)$ for all $E\in\A$.
\end{enumerate}
\end{theorem}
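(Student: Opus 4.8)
The plan is to apply the representation Theorem \ref{numeasure} directly to the Kolmogorovian measure $(\Omega,\A,\mu)$, taking as distinguished subalgebra $\B=\ring{C}$, the ring of finite disjoint unions of cylinder sets. First I would verify the hypotheses of that theorem. Being a probability measure, $\mu$ is in particular a finitely additive measure, and it is non-atomic: every singleton is the countable intersection $\{\omega\}=\bigcap_{n\in\N}C_{(\omega(1),\ldots,\omega(n))}^{(1,\ldots,n)}\in\A$, so $\mu(\{\omega\})=\lim_n 2^{-n}=0$, and hence all finite subsets of $\Omega$ are null. Moreover $\ring{C}$ is a subalgebra of $\A$ every nonempty member of which has strictly positive (finite) measure, so it contains no nonempty null set. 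Theorem \ref{numeasure} then produces a non-Archimedean field $\F\supset\R$ and an elementary numerosity $\num:\p(\Omega)\to[0,+\infty)_\F$ satisfying its conclusions (1) and (2). I would fix the normalization by applying conclusion (2) with $Z=\Omega$, so that $\beta=\num(\Omega)/\mu(\Omega)=\num(\Omega)$ and $P(E)=\num(E)/\num(\Omega)=\num(E)/\beta$.

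With this setup, condition (3) is immediate: conclusion (2) of Theorem \ref{numeasure} with $Z=\Omega$ gives $\mu(E)=\num_\beta(E)=\st\left(\num(E)/\num(\Omega)\right)=\st(P(E))$ for every $E\in\A$. Condition (1) holds for \emph{any} elementary numerosity: for nonempty finite $F$ the unit-size and additivity axioms yield $\num(F)=|F|$ and $\num(E\cap F)=|E\cap F|$, so the conditional probability is $P(E\cap F)/P(F)=\num(E\cap F)/\num(F)=|E\cap F|/|F|$.

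The one point needing genuine argument is condition (2), and here I would combine conclusion (1) of Theorem \ref{numeasure} with a finite partition of $\Omega$. The $2^n$ cylinders $C_{(t_1,\ldots,t_n)}^{(1,\ldots,n)}$ fixing the first $n$ outcomes are pairwise disjoint, lie in $\ring{C}$, have common finite measure $2^{-n}$, and partition $\Omega$. Having equal measure, conclusion (1) forces them to have a common numerosity $\nu$, and finite additivity of $\num$ gives $\num(\Omega)=2^n\nu$, whence $\nu=2^{-n}\num(\Omega)$. An arbitrary cylinder $C_{(t_1,\ldots,t_n)}^{(i_1,\ldots,i_n)}$ of codimension $n$ again has measure $2^{-n}$, so conclusion (1) assigns it this same numerosity $2^{-n}\num(\Omega)$; dividing by $\num(\Omega)$ yields $P(C_{(t_1,\ldots,t_n)}^{(i_1,\ldots,i_n)})=2^{-n}$, which is (2).

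The subtlety I expect to be the crux is that (2) is an \emph{exact} identity in $\F$, while (3) is an identity only up to infinitesimals, and both must hold for a single numerosity. This is exactly what the simultaneous use of the two conclusions of Theorem \ref{numeasure} supplies: conclusion (1) pins down the precise numerosities of cylinders through the equal-measure partition argument, while conclusion (2) controls the standard parts over the whole $\sigma$-algebra $\A$. I would therefore be careful to check that the single choice $\B=\ring{C}$, $Z=\Omega$ feeds both conclusions, and that the non-atomicity of $\mu$—the hypothesis licensing the theorem—has been established from the vanishing measure of singletons.
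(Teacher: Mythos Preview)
Your proposal is correct and follows essentially the same route as the paper: apply Theorem \ref{numeasure} with $\B=\ring{C}$, use conclusion (2) with $Z=\Omega$ to get property (3), derive property (1) from the unit-size axiom, and obtain the exact identity (2) by partitioning $\Omega$ into $2^n$ equal-measure cylinders so that conclusion (1) forces each to have numerosity $2^{-n}\num(\Omega)$. The only cosmetic difference is that the paper partitions using the cylinders with the \emph{same} index tuple $(i_1,\ldots,i_n)$ as the given cylinder (varying the values), whereas you partition with the first-$n$-coordinates cylinders and then invoke conclusion (1) once more; both work equally well.
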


\begin{proof}
Recall that the family $\mathfrak{C}\subset\mathfrak{A}$ of
finite disjoint unions of cylinder sets is an algebra whose nonempty sets
have all positive measure. So, by applying Theorems
\ref{numeasure} and \ref{inner measure theorem},
we obtain an elementary numerosity $\num:\p(\Omega)\to[0,+\infty)_\F$
such that for every positive number of the form
$\beta=\frac{\num(Z)}{\mu(Z)}$ (where $0<\mu(Z)<+\infty$), one has:

\smallskip
\begin{enumerate}
\item[$(i)$]
$\num(C)=\num(C')$ whenever $C,C'\in\mathfrak{C}$ are
such that $\mu(C)=\mu(C')$\,;

\smallskip
\item[$(ii)$]
$\num_\beta(E)=\mu(E)$ for all $E\in\A$.
\end{enumerate}

\smallskip
Property $(1)$ trivially follows by recalling that elementary
numerosities of finite sets agree with cardinality:
$$P(E|F)\ =\ \frac{P(E\cap F)}{P(F)}\ =\
\frac{\frac{\num(E\cap F)}{\num(\Omega)}}{\frac{\num(F)}{\num(\Omega)}}\ =\
\frac{\num(E\cap F)}{\num(F)}\ =\ \frac{|E\cap F|}{|F|}.$$

Let us now turn to condition $(2)$.
Notice that for any fixed $n$-tuple of indices $(i_1,\ldots,i_n)$:

\begin{itemize}
\item
There are exactly $2^n$-many different $n$-tuples $(t_1, \ldots, t_n)$
of heads and tails;
\item
The associated cylinder sets $C_{(t_1,\ldots,t_n)}^{(i_1,\ldots,i_n)}$
are pairwise disjoint and their union equals the whole sample space $\Omega$.
\end{itemize}

By $(i)$, all those cylinder sets of codimension $n$
have the same numerosity
$\eta=\num\left(C_{(t_1,\ldots,t_n)}^{(i_1,\ldots,i_n)}\right)$
and so, by additivity, it must be $\num(\Omega)=2^n \cdot \eta$.
We conclude that
$$P\left(C_{(t_1, \ldots, t_k)}^{(i_1, \ldots, i_n)}\right)\ =\
\frac{\num\left(C_{(t_1, \ldots, t_k)}^{(i_1, \ldots, i_n)}\right)}
{\num(\Omega)}\ =\ \frac{\eta}{2^n\cdot\eta}\ =\ 2^{-n}.$$

We are left to prove $(3)$. By taking as
$\beta=\frac{\num(\Omega)}{\mu(\Omega)}=\num(\Omega)$,
property $(ii)$ ensures that for every $E\in\A$:
$$\mu(E)\ =\ \num_\beta(E)\ =\
\st\left(\frac{\num(E)}{\beta}\right)\ =\
\st\left(\frac{\num(E)}{\num(\Omega)}\right)\ =\
\st(P(E)).$$
\end{proof}

\bigskip

\end{document}